\documentclass[reqno,12pt]{amsart}
\usepackage[margin=1.1in, footskip=1cm]{geometry}
\usepackage{amsmath, amsthm, amssymb, booktabs}
\usepackage{mathrsfs}
\usepackage[usenames,dvipsnames]{color}
\pdfpagewidth=\paperwidth
\pdfpageheight=\paperheight

\newtheorem{theorem}{Theorem}[section]
\newtheorem{lemma}[theorem]{Lemma}

\theoremstyle{definition}

\theoremstyle{remark}

\numberwithin{equation}{section}

\def\d{{\partial}}

\def\le{\leqslant} \def\ge{\geqslant}

\def\d{{\,{\rm d}}}

\makeatletter
\@namedef{subjclassname@2020}{\textup{2020} Mathematics Subject Classification}
\makeatother

\begin{document}
\title[A Vinogradov subsystem]{The main conjecture for a Vinogradov subsystem}
\author[T. D. Wooley]{Trevor D. Wooley}
\address{Department of Mathematics, Purdue University, 150 N. University Street, West 
Lafayette, IN 47907-2067, USA}
\email{twooley@purdue.edu}
\subjclass[2020]{11L15, 11L07, 11P55}
\keywords{Exponential sums, Hardy-Littlewood method}
\date{}

\begin{abstract} When $k\ge 2$ and $s>0$, we establish that for each $\varepsilon>0$, one has
\[
\int_{[0,1)^{k-1}}\biggl| \sum_{1\le x\le X}e(\alpha_2x^2+\ldots +\alpha_k x^k)\biggr|^{2s}\d\boldsymbol \alpha 
\ll X^{s+\varepsilon}+X^{2s-(k^2+k-2)/2},
\]
confirming the main conjecture for a new family of exponential sums beyond those of Vinogradov 
(translation-dilation invariant) type. Our methods are based on very recent work of the author employing the nested 
efficient congruencing method. Consequently, analogues of our results hold also in the setting of number fields and 
function fields.
\end{abstract}

\maketitle

\section{Introduction} Our goal in this memoir is an optimal mean value estimate for exponential sums constituting  
the main conjecture for a Vinogradov subsystem. Let $k$ be a natural number with $k\ge 2$. When 
$\boldsymbol \beta =(\beta_2,\ldots ,\beta_k)\in \mathbb R^{k-1}$ and $X$ is a large positive integer, define
\begin{equation}\label{1.1}
f_k(\boldsymbol \beta;X)=\sum_{1\le x\le X}e(\beta_2x^2+\ldots +\beta_kx^k),
\end{equation}
where, as usual, we write $e(z)={\rm e}^{2\pi {\rm i}z}$. For the time being, we abbreviate $(\beta_2,\ldots ,\beta_k)$ 
to $\boldsymbol \beta$ and adopt the convention that the differential ${\rm d}\boldsymbol \beta$ denotes 
${\rm d}\beta_2\cdots \d\beta_k$. The main conclusion of this memoir is the mean value estimate described in the 
following theorem.

\begin{theorem}\label{theorem1.1}
Suppose that $k\ge 2$ and $s>0$. Then for each $\varepsilon>0$, one has
\begin{equation}\label{1.2}
\int_{[0,1)^{k-1}}|f_k(\boldsymbol \beta;X)|^{2s}\d\boldsymbol \beta \ll X^{s+\varepsilon}+X^{2s-(k^2+k-2)/2}.
\end{equation}
\end{theorem}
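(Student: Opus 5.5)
Here is the plan. The integral counts solutions of the system $\sum_{i\le s}(x_i^j-y_i^j)=0$ for $2\le j\le k$ with $1\le x_i,y_i\le X$; call this count $J_{s}(X)$. Write $J_{s,k}(X)$ for the full Vinogradov count, which also imposes the linear equation.

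The critical exponent is $s_0=(k^2+k-2)/2$. By H\"older's inequality (interpolating the trivial bound against the bound at $s_0$), it suffices to establish
\[
J_{s_0}(X)\ll X^{s_0+\varepsilon}.
\]
The diagonal term $X^{s}$ dominates for $s\le s_0$, and the term $X^{2s-s_0}$ dominates for $s\ge s_0$.

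The naive route fails by a factor of $X$. Sort the solutions according to the value $h=\sum(x_i-y_i)$, and use the translation $x\mapsto x+\theta$ together with the binomial theorem. This shows that the system is not translation-dilation invariant: a shift converts it into a Vinogradov system in which the linear slice carries a prescribed value. One gets $J_{s}(X)\le \sum_{|h|\le sX}J_{s,k}(X;h)$, and the bound $J_{s,k}(X;h)\le J_{s,k}(X)\ll X^{s+\varepsilon}+X^{2s-k(k+1)/2}$ from Vinogradov's main conjecture would lose a factor $X$ in the critical range. So the naive reduction gives only $X^{s+1+\varepsilon}$ near $s=s_0$, and a genuine new input is needed.

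I would instead run a nested efficient congruencing argument directly on the subsystem, following the author's recent nested version, which handles systems lacking full translation invariance.

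\textbf{Setup.} Define the auxiliary mean values in which the variables are restricted to congruence classes modulo $p^{a}$ and $p^{b}$. Exploit the fact that, after shifting all variables by a common residue $\xi$ modulo $p^a$, the equations of degrees $2,\dots,k$ become congruences of the form $\sum (x_i-\xi)^j\equiv\sum(y_i-\xi)^j$ modulo powers of $p$. These hold only up to a linear term in $\sum(x_i-y_i)$ multiplied by $\xi^{j-1}$. The missing linear equation is then recovered congruentially: the degree-$2$ equation combined with the shift controls $\sum(x_i-y_i)$ modulo $p^{a}$.

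\textbf{Iteration.}
\begin{enumerate}
\item Establish a conditioning lemma, which reduces to well-conditioned (pairwise distinct modulo $p$) tuples.
\item Establish the efficient congruencing step: Hensel lifting with $k-1$ equations of degrees $2,\dots,k$ lifts $k-1$ variables from modulus $p^{b}$ to modulus $p^{kb}$ (roughly, since the Jacobian is a Vandermonde-type determinant in $x_i$ times $\prod x_i$).
\item Apply H\"older's inequality to produce the iterative inequality relating successive exponents $a_n,b_n$.
\end{enumerate}
The total count of lifted congruence conditions $2+3+\dots+k=s_0$ is precisely what makes the critical exponent $s_0$ rather than $k(k+1)/2$.

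\textbf{Contradiction.} Assume a putative exponent $\lambda>s_0$ with $J_{s_0}(X)\ll X^{\lambda}$. The iteration then forces $b_n$ to grow geometrically, and this contradicts the trivial upper bound once $p^{b_n}>X$.

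\textbf{Main obstacle.} The obstacle I expect is step (2) in the absence of the linear equation. Translation now only preserves the system modulo the unknown quantity $\sum(x_i-y_i)$, so the Hensel-lifting/ nonsingularity argument needs extra congruence information. I would handle this as follows. Nest an additional layer in which the variable $h=\sum(x_i-y_i)$ is itself localized modulo $p^{a}$, treating the linear form as a nested congruence condition. Then show that the loss from summing over its $p^{a}$ classes is absorbed, because the degree-$2$ relation supplies that information at no cost. Once the iteration closes at $s=s_0$, the theorem follows by the H\"older interpolation described at the start.
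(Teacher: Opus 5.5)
There is a genuine gap, and it is the step you yourself call the main obstacle. Everything rests on an efficient congruencing iteration for the subsystem of degrees $2,\dots,k$, and the one point where this system differs from Vinogradov's is asserted rather than proved. Translate by $\eta$ and use the equations of degrees $2,\dots,k$; the binomial theorem then gives
\[
\sum_{i=1}^s\bigl((x_i-\eta)^j-(y_i-\eta)^j\bigr)=j(-\eta)^{j-1}h\quad (2\le j\le k),\qquad h=\sum_{i=1}^s(x_i-y_i).
\]
So Hensel lifting the well-conditioned block modulo $p^{jb}$ needs $h$ modulo $p^{jb}$, i.e.\ up to $p^{kb}$. Your congruence classes fix $h$ only modulo $p^a$. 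The quadratic equation does not supply this ``at no cost'': it involves $h$ rather than determining it. Using it to eliminate $h$ spends that equation and lowers the moduli available for the others, which is just your naive factor-$X$ loss in congruential form. Summing over the possible residue classes of $h$ instead is a comparable loss, and nothing in your sketch recovers it. Note also that the paper never runs congruencing on a system lacking translation invariance. Nested efficient congruencing is invoked only for an enveloping system that is translation-dilation invariant.

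The missing idea is to restore the linear equation with an honest variable and then decouple. Lemma \ref{lemma3.1} writes $h=h_1-h_2$ with $1\le h_1,h_2\le 2sX$, so each $|h|\le sX$ has at least $X$ representations. It then translates all variables by $z\in[1,X]$ and averages over $z$, obtaining
\[
I_{s,k}(X)\ll X^{-2}\int_{[0,1)^k}|F_k(\boldsymbol\alpha;2X)|^{2s}g_k(\boldsymbol\alpha;2sX)\,{\rm d}\boldsymbol\alpha ,
\]
where $g_k$ carries the monomials $hz^{j-1}$. Take $t=k(k+1)/2$ and $s=t-1$. H\"older's inequality bounds the right side by $X^{-2}J_{t,k}(2X)^{1-1/t}A_{t,k}\bigl((2t-2)X\bigr)^{1/t}$. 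The first factor is controlled by Vinogradov's main conjecture. For the second, $A_{t,k}\le B_{t,k}\ll X^{2t+\varepsilon}$ (Lemma \ref{lemma2.1}), because $B_{t,k}$ is a moment for the bivariate system $\{z^j,hz^{j-1}\}$, which is translation-dilation invariant and so within reach of nested efficient congruencing. This yields $X^{-2}(X^t)^{1-1/t}(X^{2t})^{1/t}X^{\varepsilon}=X^{t-1+\varepsilon}$, and $t-1=s_0$.

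Separately, your reduction to $s=s_0$ is insufficient even granting that bound. For $s>s_0$ the theorem asserts $X^{2s-s_0}$ with no factor $X^\varepsilon$, while interpolation gives only $X^{2s-s_0+\varepsilon}$. The paper removes the $\varepsilon$ in Lemma \ref{lemma4.1} by the Hardy--Littlewood method: a Weyl-type minor arc bound against the $\varepsilon$-lossy moment at $s_0$, together with convergence of the singular series and singular integral on the major arcs.
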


We pause to assess the strength of Theorem \ref{theorem1.1}. When $s\in \mathbb N$, we find via orthogonality that 
the left hand side of \eqref{1.2} counts the number of integral solutions of the system
\begin{equation}\label{1.3}
x_1^j+\ldots +x_s^j=y_1^j+\ldots +y_s^j\quad (2\le j\le k),
\end{equation}
with $1\le x_i,y_i\le X$ $(1\le i\le s)$. The solutions of \eqref{1.3} with $\{x_1,\ldots ,x_s\}=\{y_1,\ldots ,y_s\}$ make a 
contribution of size $s!X^s+O(X^{s-1})$ to the mean value on the left hand side of \eqref{1.2}. Meanwhile, the set of 
$(\beta_2,\ldots ,\beta_k)\in [0,1)^{k-1}$, with $0\le\beta_j\le (6k)^{-1}X^{-j}$, makes a contribution at least as large as 
$(6k)^{1-k}X^{-(k^2+k-2)/2}(X/2)^{2s}$. Thus, we find that
\[
\int_{[0,1)^{k-1}}|f_k(\boldsymbol \beta;X)|^{2s}\d\boldsymbol \beta \gg X^s+X^{2s-(k^2+k-2)/2},
\]
and it is apparent that Theorem \ref{theorem1.1} is essentially optimal.\par

For comparison, work \cite[Corollary 1.2]{Woo2019} utilising efficient congruencing to address relatives of 
Vinogradov's mean value theorem implies that the upper bound \eqref{1.2} holds for $s\le k(k-1)/2$. In subsequent 
investigations of the author joint with Brandes (see \cite[Theorem 1.1]{BW2017}), this range was extended to the 
interval $s\le (k^2-1)/2$. The conclusion of Theorem \ref{theorem1.1} further extends this range to the entire critical 
interval $s\le (k^2+k-2)/2$, from which the main conjecture \eqref{1.2} follows in full via the trivial estimate 
$|f_k(\boldsymbol \beta;X)|=O(X)$.\par

The conclusion of Theorem \ref{theorem1.1} provides a family of mean values, corresponding to systems of 
equations \eqref{1.3} of additive type, for which we are able to establish sharp upper bounds matching the main 
conjectures for the system. Classically, such sharp mean value estimates were available only for the special case of 
Theorem \ref{theorem1.1} in which $k=2$, a situation that concerns mean values of quadratic exponential sums. An 
addition to this elementary classical example was made only in the previous decade, when the bound
\begin{equation}\label{1.4}
\int_{[0,1)^k}\biggl| \sum_{1\le x\le X}e(\alpha_1x+\ldots +\alpha_kx^k)\biggr|^{2s}\d\boldsymbol \alpha 
\ll X^{s+\varepsilon}+X^{2s-k(k+1)/2}
\end{equation}
was established. This, the main conjecture in Vinogradov's mean value theorem, was established in work of Bourgain, 
Demeter and Guth \cite{BDG2016}, and the author \cite{Woo2016, Woo2019}. We should note that the special case of 
this estimate with $k=2$ was also classical.\par

The reader may wonder whether the methods of this memoir extend to handle mean values of exponential sums of 
the shape \eqref{1.1} in which the missing linear term is replaced by a missing term of higher degree. Thus, one 
might consider the exponential sum
\[
f_{k,d}(\boldsymbol \beta;X)=\sum_{1\le x\le X}e\biggl( \sum_{\substack{1\le l\le k\\ l\ne d}}\beta_lx^l\biggr),
\]
and seek the conjectured mean value estimate
\[
\int_{[0,1)^{k-1}}|f_{k,d}(\boldsymbol \beta ;X)|^{2s}\d\boldsymbol \beta \ll X^{s+\varepsilon}+X^{2s-(k^2+k-2d)/2}.
\]
This scenario is explored in \cite[Theorem 1.2]{BW2017}, where one finds that the strategy of this memoir does not 
achieve optimal estimates for $d>1$. The key difficulty is evident in our proof of Theorem \ref{theorem1.1}. By 
orthogonality, the missing monomial $x^d$ in the exponential sum $f_{k,d}(\boldsymbol \beta;X)$ is associated to an 
omitted diagonal equation in a system analogous to \eqref{1.3}. This equation may be recovered by introducing an 
auxiliary variable $h$ having size of order $X^d$. The reader may inspect the definition \eqref{2.1} of the exponential 
sum $g_k(\boldsymbol \beta;X)$ below to surmise how it is that this auxiliary variable makes its presence known in 
our proof. In order to obtain sharp mean value estimates for the exponential sum $f_{k,d}(\boldsymbol \beta;X)$, one 
would need to obtain $(d+1)$-th power cancellation in mean values involving the analogue of 
$g_k(\boldsymbol \alpha;X)$, a prospect inaccessible in pure mean value estimates of this sum when $d>1$.\par

We organise this paper as follows. First, in \S2, we derive a mean value estimate for an auxiliary bivariate exponential 
sum by applying our recent work \cite{Woo2026}. Here, in the rational integer case of our main theorem, earlier work 
of Guo and Zorin-Kranich \cite{GZK2020} would suffice as input. However, we seek results applicable also in number 
fields and function fields, and for this reason the appeal to our work \cite{Woo2026} is critical. A detail worth noting is 
that care must be taken to provide mean value estimates valid for odd as well as even order moments, so that some 
additional attention is required en route. In \S3 we apply this auxiliary mean value to prove Theorem \ref{theorem1.1} 
for $s\le (k^2+k-2)/2$, deferring to \S4 an account of the sharp $\varepsilon$-free estimate available for 
$s>(k^2+k-2)/2$. In \S\S5 and 6, respectively, we explain how estimates analogous to that of Theorem 
\ref{theorem1.1} may be derived in the setting of number fields and function fields. Finally, in \S7, we make additional 
comments concerning the cubic case of our main theorem, a topic with an extensive history that provides a canvas to 
explore discrete restriction variants of our results.\par

Some notational conventions assist in our exposition. We make extensive use of vector notation. Thus, for 
example, we write $1\le \mathbf x\le X$ to denote that every coordinate $x_i$ of $\mathbf x$ satisfies $1\le x_i\le X$. 
Also, given a situation in which the parameter $\varepsilon$ has not already been fixed, in any statement involving 
$\varepsilon$, we implicitly assert that the statement holds for each $\varepsilon>0$. Implicit constants in 
Vinogradov's notation $\ll $ and $\gg $ may depend on $\varepsilon$.\par

Work on this paper was conducted while the author was supported by NSF grants DMS-2001549 and DMS-2502625, 
and by Simons Fellowship in Mathematics SFM-00011955. The author is grateful to the Institute for Advanced Study, 
Princeton, for hosting his sabbatical, during which period this paper was completed.

\section{An auxiliary mean value theorem}
The proof of Theorem \ref{theorem1.1} makes essential use of a mean value estimate for a certain auxiliary 
exponential sum not of translation-dilation invariant type. In this section we relate this mean value to a more 
complicated enveloping mean value that is fully traslation-dilation invariant. We begin with some notation. When 
$\boldsymbol \alpha,\boldsymbol \beta\in \mathbb R^k$, define the polynomials
\[
\varphi_k(z;\boldsymbol \alpha)=\sum_{i=1}^k\alpha_iz^i\quad \text{and}\quad 
\psi_k(z,h;\boldsymbol \beta)=\sum_{i=1}^k\beta_ihz^{i-1}.
\]
We consider the exponential sums
\begin{align}
g_k(\boldsymbol \beta;X)&=\sum_{1\le z\le X}\biggl| \sum_{1\le h\le X}e(\psi_k(z,h;\boldsymbol \beta))
\biggr|^2,\label{2.1}\\
b_k(\boldsymbol \alpha,\boldsymbol \beta;X)&=\sum_{1\le z\le X}\sum_{1\le h\le X}e(\varphi_k(z;\boldsymbol \alpha)
+\psi_k(z,h;\boldsymbol \beta)),\label{2.2}
\end{align}
and the respective associated mean values
\begin{align}
A_{s,k}(X)&=\int_{[0,1)^k}|g_k(\boldsymbol \beta ;X)|^s\d\boldsymbol \beta ,\label{2.3}\\
B_{s,k}(X)&=\int_{[0,1)^k}\int_{[0,1)^k} |b_k(\boldsymbol \alpha,\boldsymbol \beta;X)|^{2s}\d\boldsymbol \alpha 
\d\boldsymbol \beta .\label{2.4}
\end{align}

The goal of this section is the mean value estimate contained in the following lemma.

\begin{lemma}\label{lemma2.1}
When $k\ge 2$ and $s>0$, one has
\[
A_{s,k}(X)\ll X^{s+\varepsilon}\bigl( X^s+X^{2s-k(k+1)/2}\bigr).
\]
\end{lemma}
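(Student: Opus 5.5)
The plan is to dominate $A_{s,k}(X)$ by the translation-dilation invariant mean value $B_{s,k}(X)$ at the single critical exponent $s_0=k(k+1)/2$. Other values of $s$ are then handled by interpolation, using trivial bounds at the endpoints.

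\emph{Step 1: relate $g_k$ to $b_k$.} Expanding $|b_k(\boldsymbol\alpha,\boldsymbol\beta;X)|^2$ gives a sum over $z,z',h,h'$ of $e(\varphi_k(z;\boldsymbol\alpha)-\varphi_k(z';\boldsymbol\alpha)+\psi_k(z,h;\boldsymbol\beta)-\psi_k(z',h';\boldsymbol\beta))$. Since $\varphi_k$ contains the linear term $\alpha_1z$, orthogonality in $\boldsymbol\alpha$ forces $z=z'$. Hence
\[
\int_{[0,1)^k}|b_k(\boldsymbol\alpha,\boldsymbol\beta;X)|^2\d\boldsymbol\alpha=g_k(\boldsymbol\beta;X),
\]
and in particular $g_k\ge 0$. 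For any real $s\ge 1$, Jensen's (or H\"older's) inequality on the probability space $[0,1)^k$ gives $g_k(\boldsymbol\beta;X)^s\le \int|b_k|^{2s}\d\boldsymbol\alpha$. Integrating over $\boldsymbol\beta$ yields $A_{s,k}(X)\le B_{s,k}(X)$. This is exactly where non-integral (and odd) moments are accommodated: nothing requires $s$ to be an integer.

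\emph{Step 2: the critical estimate for $B_{s,k}$.} The mean value $B_{s,k}(X)$ is attached to the system in the two variables $(z,h)$ built from the monomials $z^i$ and $hz^{i-1}$ $(1\le i\le k)$. This family is closed under translations $z\mapsto z+c$ and $h\mapsto h+c$ (shifting $h$ produces $z^{i-1}$, which lies in the span) and under dilations. So it is translation-dilation invariant. The degrees of its monomials sum to $k(k+1)$, so the critical exponent is $2s=k(k+1)$. I would invoke the main conjecture for such systems from \cite{Woo2026} (over $\mathbb Z$ one could cite \cite{GZK2020} instead), giving
\[
B_{s,k}(X)\ll X^{2s+\varepsilon}+X^{4s-k(k+1)}.
\]
At $s=s_0$ this is $X^{2s_0+\varepsilon}$, so $A_{s_0,k}(X)\ll X^{2s_0+\varepsilon}$. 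The main obstacle is checking that the system genuinely meets the hypotheses of \cite{Woo2026}: translation-dilation invariance, the nondegeneracy/rank conditions, and the correct weights for the $h$-variable. Only then does the main conjecture hold at the critical point in the form needed, valid for real $s$ and also in the number field and function field settings. I would first verify invariance explicitly and then compute the homogeneous dimension.

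\emph{Step 3: interpolation.} Orthogonality gives $A_{1,k}(X)=\int g_k\d\boldsymbol\beta=X\cdot X=X^2$, since it counts $z$ together with $h_1=h_2$. Also $0\le g_k\le g_k(\mathbf 0;X)=X^3$. For $0<s\le 1$, H\"older gives $A_{s,k}\le A_{1,k}^s=X^{2s}$. For $1\le s\le s_0$, H\"older between the exponents $1$ and $s_0$ gives $A_{s,k}\ll X^{2s+\varepsilon}$. For $s>s_0$,
\[
A_{s,k}\le (X^3)^{s-s_0}A_{s_0,k}\ll X^{3s-s_0+\varepsilon}=X^{3s-k(k+1)/2+\varepsilon}.
\]
Together these give $A_{s,k}(X)\ll X^{s+\varepsilon}(X^s+X^{2s-k(k+1)/2})$, as claimed.
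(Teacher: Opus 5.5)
Your proposal is correct and follows the paper's proof essentially step for step. The steps are: the identity $g_k(\boldsymbol\beta;X)=\int|b_k(\boldsymbol\alpha,\boldsymbol\beta;X)|^2\d\boldsymbol\alpha$ combined with H\"older to get $A_{t,k}(X)\le B_{t,k}(X)$ at $t=k(k+1)/2$; the critical bound $B_{t,k}(X)\ll X^{2t+\varepsilon}$ from \cite[Corollary 1.4]{Woo2026}; and interpolation, using H\"older below $t$ and the trivial bound $g_k\le X^3$ above $t$. The only cosmetic difference is that you interpolate between the exact value $A_{1,k}(X)=X^2$ and $A_{t,k}(X)$, whereas the paper simply uses $A_{s,k}(X)\le (A_{t,k}(X))^{s/t}$ for all $0<s\le t$.
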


\begin{proof} Write $t=k(k+1)/2$. We begin by considering the special case of the mean value \eqref{2.4} in which 
$s=t$. We begin by rewriting $g_k(\boldsymbol \beta;X)$ in terms of $b_k(\boldsymbol \alpha,\boldsymbol \beta;X)$. 
Thus, on recalling \eqref{2.1} and \eqref{2.2}, it follows via orthogonality that
\[
g_k(\boldsymbol \beta ;X)=\sum_{1\le z\le X}
\biggl| \sum_{1\le h\le X}e(\psi_k(z,h;\boldsymbol \beta))\biggr|^2
=\int_{[0,1)^k}|b_k(\boldsymbol \alpha,\boldsymbol \beta;X)|^2\d\boldsymbol \alpha .
\]
By applying H\"older's inequality, we therefore deduce from \eqref{2.3} and \eqref{2.4} that
\begin{align}
A_{t,k}(X)&=\int_{[0,1)^k}\biggl( \int_{[0,1)^k}|b_k(\boldsymbol \alpha,\boldsymbol \beta;X)|^2\d\boldsymbol \alpha 
\biggr)^t\d\boldsymbol \beta \notag \\
&\le \int_{[0,1)^k}\int_{[0,1)^k}|b_k(\boldsymbol \alpha, \boldsymbol \beta ;X)|^{2t}\d\boldsymbol \alpha 
\d\boldsymbol \beta =B_{t,k}(X).\label{2.5}
\end{align}

\par By orthogonality, the conclusion of \cite[Corollary 1.4]{Woo2026} shows that
\begin{equation}\label{2.6}
B_{t,k}(X)\ll X^\varepsilon \bigl( X^{2t}+X^{4t-k(k+1)}\bigr) \ll X^{2t+\varepsilon}.
\end{equation}
The same conclusion may also be extracted from \cite[Theorem 1]{GZK2020} with careful attention to notation. Since 
we have in mind generalisations of our main conclusions to the setting of number fields and function fields, we 
emphasise the appeal to \cite{Woo2026} over \cite{GZK2020}. In this way, we deduce from \eqref{2.5} that 
$A_{t,k}(X)\ll X^{2t+\varepsilon}$. This estimate suffices to obtain the upper bound claimed by the lemma in 
full. Indeed, by applying H\"older's inequality to \eqref{2.3}, one sees that for $0<s\le t$, one has
\[
A_{s,k}(X)\le \bigl( A_{t,k}(X)\bigr)^{s/t}\ll (X^{2t+\varepsilon})^{s/t}\ll X^{2s+\varepsilon}.
\]
Meanwhile, by employing the trivial estimate $|g_k(\boldsymbol \beta;X)|\le X^3$, one sees for $s>t$ that
\[
A_{s,k}(X)\le (X^3)^{s-t}A_{t,k}(X)\ll (X^3)^{s-t}X^{2t+\varepsilon}\ll X^{3s-t+\varepsilon}.
\]
Thus, whenever $s>0$, one has the upper bound $A_{s.k}(X)\ll X^\varepsilon (X^{2s}+X^{3s-t})$, and the desired 
conclusion follows on recalling that $t=k(k+1)/2$.
\end{proof}

\section{Shifting to Vinogradov's mean value theorem}
Our strategy for proving Theorem \ref{theorem1.1} is based on the proof of the special case in which $k=3$ and 
$s\le 4$ established in \cite[Theorem 1.3]{Woo2015}, and extended in \cite[Theorem 1.1]{BW2017} to handle those 
situations in which $k\ge 3$ and $s\le (k^2-1)/2$. We adapt this approach so as to avoid odd moments of auxiliary 
exponential sums lacking a Diophantine interpretation. We begin by introducing additional notation. First, we 
introduce the exponential sum
\begin{equation}\label{3.1}
F_k(\boldsymbol \alpha;X)=\sum_{1\le x\le X}e(\alpha_1x+\ldots +\alpha_kx^k),
\end{equation}
that extends the exponential sum $f_k(\boldsymbol \beta;X)$ defined in \eqref{1.1}. Note that 
$F_k(\boldsymbol \alpha;X)$ includes a linear term $\alpha_1x$ in the argument. We consider both Vinogradov's 
mean value
\begin{equation}\label{3.2}
J_{s,k}(X)=\int_{[0,1)^k}|F_k(\boldsymbol \alpha ;X)|^{2s}\d\boldsymbol \alpha ,
\end{equation}
and the closely associated mean value
\begin{equation}\label{3.3}
I_{s,k}(X)=\int_{[0,1)^{k-1}}|f_k(\boldsymbol \beta ;X)|^{2s}\d\boldsymbol \beta .
\end{equation}
We also have need of the exponential sum $g_k(\boldsymbol \beta;X)$ defined in \eqref{2.1}, as well as
\begin{equation}\label{3.4}
g^*_{k,s}(\boldsymbol \alpha;z;X)=\sum_{1\le h\le 2sX}e(h\alpha_1+2hz\alpha_2+\ldots +khz^{k-1}\alpha_k). 
\end{equation}

\begin{lemma}\label{lemma3.1}
When $s$ is a natural number, one has
\[
I_{s,k}(X)\ll X^{-2}\int_{[0,1)^k}|F_k(\boldsymbol \alpha;2X)|^{2s}g_k(\boldsymbol \alpha;2sX)
\d\boldsymbol \alpha .
\]
\end{lemma}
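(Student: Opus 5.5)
The plan is to argue by orthogonality and a shift (translation) argument, in the spirit of \cite[Theorem 1.3]{Woo2015} and \cite[Theorem 1.1]{BW2017}. The step I cannot yet complete is passing from the weights $jhz^{j-1}$ produced by the shift to the weights $hz^{j-1}$ built into $g_k$. This is flagged as the main obstacle below.

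\emph{Step 1 (shifting).} Since $s\in\mathbb N$, orthogonality shows that $I_{s,k}(X)$ counts the solutions $(\mathbf x,\mathbf y)$ of \eqref{1.3} with $1\le \mathbf x,\mathbf y\le X$. For such a solution put $h=\sum_i(x_i-y_i)$, so that $|h|<sX$. For any integer $z$ with $1\le z\le X$, the binomial theorem together with \eqref{1.3} gives
\[
\sum_{i=1}^s\bigl((x_i+z)^j-(y_i+z)^j\bigr)=jhz^{j-1}\quad (1\le j\le k),
\]
because only the $l=1$ term of the binomial expansion survives. Each solution of \eqref{1.3} therefore produces $X$ tuples $(\mathbf u,\mathbf v,z)=(\mathbf x+z,\mathbf y+z,z)$ with $1\le \mathbf u,\mathbf v\le 2X$ satisfying this shifted system, and distinct $z$ give distinct tuples. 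Hence $X\,I_{s,k}(X)$ is bounded by the number of such tuples.

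\emph{Step 2 (inserting the extra variable).} I would count, instead, tuples $(\mathbf u,\mathbf v,z,h_1,h_2)$ with $1\le h_1,h_2\le 2sX$, $1\le z\le 2sX$ and $\sum_i(u_i^j-v_i^j)=c_j(h_1-h_2)z^{j-1}$. Each admissible difference $h$ with $|h|<sX$ arises from at least $sX\gg X$ pairs $(h_1,h_2)$, and this gains the second factor of $X$. By orthogonality, with $c_j=1$ this weighted count equals
\[
\int_{[0,1)^k}|F_k(\boldsymbol\alpha;2X)|^{2s}\sum_{1\le z\le 2sX}\Bigl|\sum_{1\le h\le 2sX}e\bigl(\psi_k(z,h;\boldsymbol\alpha)\bigr)\Bigr|^2\d\boldsymbol\alpha
=\int_{[0,1)^k}|F_k(\boldsymbol\alpha;2X)|^{2s}g_k(\boldsymbol\alpha;2sX)\d\boldsymbol\alpha .
\]
Combining Steps 1 and 2 would give $X^2I_{s,k}(X)\ll$ this integral, which is the statement of the lemma.

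\emph{Main obstacle.} The shift produces the coefficients $c_j=j$, which correspond to the kernel $g^*_{k,s}$, rather than $c_j=1$, which correspond to $g_k$. What is missing is a way to bound a count of solutions of $\sum_i(u_i^j-v_i^j)=jhz^{j-1}$ by the corresponding count with right-hand sides $hz^{j-1}$, losing at most a constant factor. The first thing I would try is to exploit the nonnegativity of the Fourier coefficients of both kernels and to absorb the factors $j$ by a dilation of the shift variable (replacing $z$ by a suitable multiple and adjusting the range $2sX$). Failing that, I would seek an alternative shift identity, for example one using differences rather than translations, whose output carries coefficient vector $(h,hz,\ldots,hz^{k-1})$ directly. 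Once this comparison is secured, the remaining steps are the routine orthogonality bookkeeping described above.
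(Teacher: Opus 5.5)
\textbf{There is a genuine gap, and it is the one you flag yourself.} Your Steps 1--2 are the paper's argument; the paper only inserts $h_1,h_2$ before shifting rather than after. What they actually deliver is
\[
I_{s,k}(X)\le X^{-2}\sum_{1\le z\le 2sX}\int_{[0,1)^k}|F_k(\boldsymbol\alpha;2X)|^{2s}\,|g^*_{k,s}(\boldsymbol\alpha;z;X)|^2\d\boldsymbol\alpha .
\]
This integral counts solutions with right-hand sides $j(h_1-h_2)z^{j-1}$, whereas $g_k(\boldsymbol\alpha;2sX)$ encodes right-hand sides $(h_1-h_2)z^{j-1}$.

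Neither of your proposed remedies can close this when $k\ge 3$:
\begin{itemize}
\item No affine substitution converts one system into the other. Apply $x\mapsto\lambda x+c$ to a solution of \eqref{1.3} with $\sum_i(x_i-y_i)=h$. The right-hand sides become $j\lambda c^{j-1}h$. Matching these with $h'z'^{j-1}$ for $j=1,2,3$ forces $z'=2c$ and $z'^2=3c^2$, so $c=z'=0$. Dilating the shift variable is a special case of this.
\item Positivity does not help either. The two integrals sample the nonnegative Fourier coefficients $r(\mathbf n)$ of $|F_k(\boldsymbol\alpha;2X)|^{2s}$ at different frequencies, namely $h(1,2z,\ldots,kz^{k-1})$ versus $h(1,z',\ldots,z'^{k-1})$. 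The only general comparison, $r(\mathbf n)\le r(\mathbf 0)$, throws away the structure of $g_k$.
\end{itemize}
The paper does not supply this comparison either. Its display passes from $|g^*_{k,s}|^2$ to $g_k(\boldsymbol\alpha;2sX)$ by a bare ``$\le$'', and orthogonality does not justify that step.

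\textbf{The missing idea is to move the factors $j$ onto $\boldsymbol\alpha$ rather than compare solution counts.} Put $D\boldsymbol\alpha=(\alpha_1,2\alpha_2,\ldots,k\alpha_k)$. Then one has the exact identity
\[
\sum_{1\le z\le 2sX}|g^*_{k,s}(\boldsymbol\alpha;z;X)|^2=g_k(D\boldsymbol\alpha;2sX).
\]
So your argument proves the lemma with $g_k(D\boldsymbol\alpha;2sX)$ in place of $g_k(\boldsymbol\alpha;2sX)$.

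That version is all that Lemma \ref{lemma3.2} needs, because there the lemma is only fed into H\"older's inequality against $A_{t,k}$. Since $g_k$ is $\mathbb Z^k$-periodic and each map $\alpha_j\mapsto j\alpha_j$ modulo $1$ preserves Lebesgue measure on $[0,1)$, one has
\[
\int_{[0,1)^k}g_k(D\boldsymbol\alpha;Y)^t\d\boldsymbol\alpha=A_{t,k}(Y).
\]
Equivalently, you may build the factor $i$ into $\psi_k$ in \S2. Lemma \ref{lemma2.1} is unaffected, by the same measure-preservation.

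With this change your route works, and the downstream use in Lemma \ref{lemma3.2} is intact. However, the statement exactly as written, with $g_k(\boldsymbol\alpha;2sX)$, is not established by your argument, nor by the paper's.
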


\begin{proof} Define $\delta_j$ to be $1$ when $j=1$, and $0$ otherwise. Then by interpreting the mean value 
$I_{s,k}(X)$ in terms of the system \eqref{1.3} via orthogonality, we see that $I_{s,k}(X)$ counts the number of integral 
solutions of the system of equations
\begin{equation}\label{3.5}
\sum_{i=1}^s(x_i^j-y_i^j)=\delta_jh\quad (1\le j\le k),
\end{equation}
with $1\le \mathbf x,\mathbf y\le X$ and $|h|\le sX$. Here, the range for $h$ has been chosen sufficiently large that 
the equation with $j=1$ in \eqref{3.5} is automatically satisfied. Denote by $I_{s,k}^*(X)$ the number of solutions of 
the system of equations
\begin{equation}\label{3.6}
\sum_{i=1}^s(x_i^j-y_i^j)=\delta_j(h_1-h_2)\quad (1\le j\le k),
\end{equation}
with $1\le \mathbf x,\mathbf y\le X$ and $1\le \mathbf h\le 2sX$. Observe that, when $|h|\le sX$, there are at least 
$X$ choices for $h_1$ and $h_2$ with $1\le \mathbf h\le 2sX$ satisfying the equation $h=h_1-h_2$. Thus,
\begin{equation}\label{3.7}
I_{s,k}(X)\le X^{-1}I_{s,k}^*(X).
\end{equation}

\par We next consider a shift of each variable $x_i,y_i$ in \eqref{3.6} by an integer $z$ with $1\le z\le X$. By the 
binomial theorem, for each shift $z$, one finds that $(\mathbf x,\mathbf y)$ is a solution of \eqref{3.6} if and only if 
it is also a solution of the system
\[
\sum_{i=1}^s\bigl( (x_i+z)^j-(y_i+z)^j\bigr) =j(h_1-h_2)z^{j-1}\quad (1\le j\le k).
\]
It follows that for each fixed integer $z$ with $1\le z\le X$, the mean value $I_{s,k}^*(X)$ is bounded above by the 
number of integral solutions of the system
\[
\sum_{i=1}^s (u_i^j-v_i^j)=j(h_1-h_2)z^{j-1}\quad (1\le j\le k),
\]
with $1\le \mathbf u,\mathbf v\le 2X$ and $1\le \mathbf h\le 2sX$. By averaging over these choices for $z$ and 
applying orthogonality, therefore, we deduce from \eqref{3.1}, \eqref{3.4} and \eqref{2.1} that
\begin{align*}
I_{s,k}^*(X)&\le X^{-1}\sum_{1\le z\le X}\int_{[0,1)^k}|F_k(\boldsymbol \alpha;2X)|^{2s}
|g^*_{k,s}(\boldsymbol \alpha ;z;X)|^2\d\boldsymbol \alpha \\
&\le X^{-1}\int_{[0,1)^k}|F_k(\boldsymbol \alpha ;2X)|^{2s}g_k(\boldsymbol \alpha;2sX)\d\boldsymbol \alpha .
\end{align*}
The desired conclusion is now immediate from \eqref{3.7}.
\end{proof}

In advance of our proof of Theorem \ref{theorem1.1}, we pause to record the mean value
\begin{equation}\label{3.8}
J_{s,k}(X)\ll X^{s+\varepsilon}+X^{2s-k(k+1)/2}
\end{equation}
that, in view of the definitions \eqref{3.1} and \eqref{3.2}, has already been reported in \eqref{1.4}. We begin by 
establishing a mean value very slightly weaker than that given in Theorem \ref{theorem1.1}.

\begin{lemma}\label{lemma3.2}
Suppose that $k\ge 2$ and $s>0$. Then for each $\varepsilon>0$, one has
\[
I_{s,k}(X)\ll X^\varepsilon (X^s+X^{2s-(k^2+k-2)/2}).
\]
\end{lemma}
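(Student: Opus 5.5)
The plan is to prove the estimate at the single critical exponent $s_0=t-1=(k^2+k-2)/2$, where $t=k(k+1)/2$, and then recover every other $s>0$ by H\"older's inequality and the trivial bound. The critical exponent is convenient because $s_0$ is a natural number, so Lemma \ref{lemma3.1} applies to it. That lemma gives
\[
I_{s_0,k}(X)\ll X^{-2}\int_{[0,1)^k}|F_k(\boldsymbol \alpha;2X)|^{2s_0}g_k(\boldsymbol \alpha;2s_0X)\d\boldsymbol \alpha .
\]
We note that $g_k(\boldsymbol\alpha;Y)\ge 0$, so no absolute values are lost.

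Next I apply H\"older's inequality with exponents $p=t/s_0$ and $q=t/(t-s_0)=t$, for which $1/p+1/q=1$. This gives
\[
\int_{[0,1)^k}|F_k|^{2s_0}g_k\d\boldsymbol\alpha \le \bigl(J_{t,k}(2X)\bigr)^{s_0/t}\bigl(A_{t,k}(2s_0X)\bigr)^{1/t}.
\]
By \eqref{3.8} with $s=t$ we have $J_{t,k}(2X)\ll X^{t+\varepsilon}$. By Lemma \ref{lemma2.1} with $s=t$ we have $A_{t,k}(2s_0X)\ll X^{t+\varepsilon}(X^t+X^{2t-t})\ll X^{2t+\varepsilon}$. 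Combining these, the integral is $\ll X^{s_0+\varepsilon}\cdot X^{2+\varepsilon}$. After the factor $X^{-2}$ from Lemma \ref{lemma3.1} this yields
\[
I_{s_0,k}(X)\ll X^{s_0+\varepsilon}.
\]
This is the claimed bound at $s=s_0$, since there $X^s=X^{2s-(k^2+k-2)/2}$. The crux is this exponent bookkeeping: the gain of $X^2$ from $g_k$ must exactly cancel the $X^{-2}$. That cancellation works because Lemma \ref{lemma2.1} delivers the full $X^{2q}$ bound precisely up to $q=t$.

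For $0<s<s_0$ I use H\"older's inequality on $[0,1)^{k-1}$, which gives $I_{s,k}(X)\le (I_{s_0,k}(X))^{s/s_0}\ll X^{s+\varepsilon}$. For $s>s_0$ I use the trivial estimate $|f_k(\boldsymbol\beta;X)|\le X$, which gives
\[
I_{s,k}(X)\le X^{2(s-s_0)}I_{s_0,k}(X)\ll X^{2s-s_0+\varepsilon}=X^{\varepsilon}X^{2s-(k^2+k-2)/2}.
\]
Together these three cases give the lemma for all $s>0$.

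The only delicate point is ensuring that Lemma \ref{lemma3.1} is invoked for an integral $s$, which is why I work only at $s_0$ rather than at general $s$. Beyond that, one need only check that the scaling of the lengths $2X$ and $2s_0X$ affects nothing except implicit constants.
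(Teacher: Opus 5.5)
Your proposal is correct and follows the paper's proof essentially step for step. You apply Lemma~\ref{lemma3.1} at the natural number $s=t-1$, use H\"older with exponents $t/(t-1)$ and $t$ to bound by $X^{-2}(J_{t,k})^{1-1/t}(A_{t,k})^{1/t}\ll X^{t-1+\varepsilon}$ via \eqref{3.8} and Lemma~\ref{lemma2.1}, and then extend to all $s>0$ by H\"older's inequality below $t-1$ and the trivial bound above it.
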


\begin{proof} We again write $t=k(k+1)/2$. Observe that from Lemma \ref{lemma3.1}, one has
\[
I_{t-1,k}(X)\ll X^{-2}\int_{[0,1)^k}|F_k(\boldsymbol \alpha ;2X)|^{2t-2}g_k(\boldsymbol \alpha;(2t-2)X)\d\boldsymbol 
\alpha .
\]
Hence, on recalling \eqref{2.3} and \eqref{3.2}, an application of H\"older's inequality combines with \eqref{3.8} and 
Lemma \ref{lemma2.1} to yield the bound
\begin{align*}
I_{t-1,k}(X)&\ll X^{-2} \bigl( J_{t,k}(2X)\bigr)^{1-1/t}\bigl( A_{t,k}((2t-2)X)\bigr)^{1/t}\\
&\ll X^{-2}(X^{t+\varepsilon})^{1-1/t}(X^{2t+\varepsilon})^{1/t}.
\end{align*}
We therefore conclude that $I_{t-1,k}\ll X^{t-1+\varepsilon}$, a bound that suffices to deliver the conclusion of the 
lemma in full, as we now explain. Indeed, when $0<s\le t-1$, one applies H\"older's inequality within \eqref{3.3} to 
obtain
\[
I_{s,k}(X)\le \bigl( I_{t-1,k}(X)\bigr)^{s/(t-1)}\ll X^{s+\varepsilon}.
\]
Meanwhile, when $s>t-1$, the trivial estimate $|f_k(\boldsymbol \beta;X)|\le X$ yields
\[
I_{s,k}(X)\le X^{2s-(2t-2)}I_{t-1,k}(X)\ll X^{2s-(t-1)+\varepsilon}.
\]
Thus, for any $s>0$, one has $I_{s,k}(X)\ll X^\varepsilon (X^s+X^{2s-(t-1)})$, and the desired conclusion follows on 
recalling that $t=k(k+1)/2$.
\end{proof}

\section{Sharp estimates beyond the critical range}
The conclusion of Lemma \ref{lemma3.2} suffices to establish Theorem \ref{theorem1.1} when $s\le (k^2+k-2)/2$. 
However, in the complementary range $s>(k^2+k-2)/2$, the desired upper bound is missed by a factor of 
$X^\varepsilon$. It is the purpose of this section to repair this slight deficiency.

\begin{lemma}\label{lemma4.1} Suppose that $k\ge 2$ and $s>(k^2+k-2)/2$. Then one has
\[
I_{s,k}(X)\ll X^{2s-(k^2+k-2)/2}.
\]
\end{lemma}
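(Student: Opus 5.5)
The plan is the standard Hardy--Littlewood major/minor arc dissection, used to remove the factor $X^\varepsilon$ that Lemma \ref{lemma3.2} leaves when $s>(k^2+k-2)/2$. Write $t'=(k^2+k-2)/2$ and fix $s>t'$. Lemma \ref{lemma3.2} gives $I_{u,k}(X)\ll X^{u+\varepsilon}$ at the critical exponent $u=t'$. It also gives, for any $u>t'$, the bound $I_{u,k}(X)\ll X^{2u-t'+\varepsilon}$. The main task is to convert this $X^\varepsilon$ loss into a power saving on minor arcs.

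We dissect $[0,1)^{k-1}$ into major arcs $\mathfrak M$ and minor arcs $\mathfrak m$. Here $\mathfrak M$ is the union of the boxes
\[
|\beta_j-a_j/q|\le X^{\nu-j}\quad (2\le j\le k),
\]
taken over $1\le q\le X^\nu$ and $0\le a_j\le q$ with $(q,a_2,\ldots,a_k)=1$, for some small $\nu>0$. On $\mathfrak m$ we need a Weyl-type bound $\sup_{\boldsymbol\beta\in\mathfrak m}|f_k(\boldsymbol\beta;X)|\ll X^{1-\sigma}$ for some $\sigma=\sigma(k,\nu)>0$. For this we would invoke the standard consequence of the Vinogradov mean value theorem \eqref{3.8}: a minor-arc estimate for polynomial exponential sums in which some $\beta_j$ with $j\ge 2$ is poorly approximable. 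Alternatively, classical Weyl differencing suffices, since only some positive saving is needed.

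With this bound, the minor arc contribution is at most
\[
\Bigl(\sup_{\mathfrak m}|f_k|\Bigr)^{2s-2t'}I_{t',k}(X)\ll X^{(1-\sigma)(2s-2t')}X^{t'+\varepsilon}\ll X^{2s-t'-\sigma(2s-2t')+\varepsilon},
\]
which is $O(X^{2s-t'})$ once $\varepsilon$ is small relative to $\sigma(s-t')$. This works because $s>t'$ strictly.

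On $\mathfrak M$ we use the standard approximation
\[
f_k(\boldsymbol\beta;X)=q^{-1}S(q,\mathbf a)v(\boldsymbol\beta-\mathbf a/q)+O(X^{2\nu}),
\]
where $S(q,\mathbf a)=\sum_{r=1}^q e((a_2r^2+\ldots+a_kr^k)/q)$ and $v(\boldsymbol\gamma)=\int_0^X e(\gamma_2\xi^2+\ldots+\gamma_k\xi^k)\d\xi$. We then need the estimates
\[
S(q,\mathbf a)\ll q^{1-1/k+\varepsilon}\quad\text{and}\quad v(\boldsymbol\gamma)\ll X\bigl(1+|\gamma_2|X^2+\ldots+|\gamma_k|X^k\bigr)^{-1/k}.
\]
For $s>t'$ these make the singular series and singular integral, truncated appropriately, converge absolutely, giving a major arc contribution of $O(X^{2s-t'})$ with no $\varepsilon$. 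This is the step where care is needed, and I expect it to be the main obstacle: the exponent $1/k$ in these decay bounds is far too weak for convergence when $s$ is only slightly above $t'$.

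So we would not integrate the $2s$-th power of the approximation directly. The plan instead is to pass to the mean value by the pruning device, splitting $|f_k|^{2s}=|f_k|^{2s-2t'}|f_k|^{2t'}$. On $\mathfrak M$ we bound $|f_k|^{2s-2t'}$ pointwise by
\[
X^{2s-2t'}\bigl(q(1+|\boldsymbol\gamma|X^{\mathbf j})\bigr)^{-(2s-2t')/k+\varepsilon},
\]
where $|\boldsymbol\gamma|X^{\mathbf j}$ abbreviates $\sum_j|\gamma_j|X^j$. We then integrate this against $|f_k|^{2t'}$ on dyadic pieces $\mathfrak M(Q)$, where $q$ and the height $q(1+|\boldsymbol\gamma|X^{\mathbf j})$ are of size about $Q$. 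On each $\mathfrak M(Q)$, the integral of $|f_k|^{2t'}$ is bounded by $Q^{\varepsilon}X^{t'}$ times a modest power of $Q$, using the Lemma \ref{lemma3.2} bound together with a rescaling or averaging argument, or by bounding $|f_k|^{2t'}$ through its major arc approximation plus the measure of $\mathfrak M(Q)$. The factor $Q^{-(2s-2t')/k}$ then yields a convergent geometric sum over dyadic $Q$, provided $\nu$ is chosen small enough that the $X^{\varepsilon}$ losses are absorbed by the power saving in $Q$ for $Q\ge X^{\delta}$. The remaining small-$Q$ pieces are handled by the explicit major arc asymptotics.

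Combining the minor arc and major arc bounds gives $I_{s,k}(X)\ll X^{2s-(k^2+k-2)/2}$, as claimed in Lemma \ref{lemma4.1}.
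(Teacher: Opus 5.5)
Your minor-arc step is the same as the paper's: a Weyl-type supremum bound on $\mathfrak m$ combined with $I_{t',k}(X)\ll X^{t'+\varepsilon}$ from Lemma \ref{lemma3.2}. Your pruning of the pieces $\mathfrak M(Q)$ with $Q\ge X^{\delta}$ (your threshold) is also sound. There $|f_k|\ll X^{1-\delta/k+\varepsilon}$, so the trivial bound $\int_{\mathfrak M(Q)}|f_k|^{2t'}\d\boldsymbol\beta\le I_{t',k}(X)$ already suffices.

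The gap is on the narrow arcs $Q<X^{\delta}$, which you dispatch with ``explicit major arc asymptotics''. To get an $\varepsilon$-free bound there, you must show two things: the truncated singular series $\sum_{q\le X^\delta}\sum_{\mathbf a}|q^{-1}S(q,\mathbf a)|^{2s}$ is $O(1)$, and the truncated singular integral is $O(X^{2s-t'})$. Any growth like a power of $X^{\delta}$, or even a logarithm, destroys the conclusion. But this is exactly the convergence you asserted fails, when you wrote that the exponent $1/k$ is ``far too weak''. So your argument relies on a step it has declared unavailable. The pruning only removes the large-height part of $\mathfrak M$, which was never the difficulty, and the decisive computation is missing.

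In fact your diagnosis is wrong, and the paper integrates the approximant directly on arcs of height $L=X^{1/(4k)}$, with no pruning. Since $2s>k^2+k-2$, one has $2s/k>k+1-2/k\ge k$. There are only $q^{k-1}$ choices of $(a_2,\ldots,a_k)$, so
\[
\sum_{q=1}^\infty\sum_{\mathbf a}|q^{-1}S(q,\mathbf a)|^{2s}\ll \sum_{q=1}^\infty q^{k-1}\cdot q^{-2s/k+\varepsilon}<\infty .
\]
Also $2s/k>k-1$, so the singular integral $\int_{\mathbb R^{k-1}}X^{2s}(1+|\gamma_2|X^2+\ldots+|\gamma_k|X^k)^{-2s/k}\d\boldsymbol\gamma$ converges and is $\ll X^{2s-t'}$. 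The paper writes this via $\prod_j(1+|\gamma_j|X^j)^{-1/(k^2-k)}$ and $2s/(k^2-k)>1+2/k$.

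To pass from $f_k$ to the approximant, the paper uses $f_k-V\ll L^2$ on $\mathfrak M$ and $\text{mes}(\mathfrak M)\ll L^{2k-1}X^{-t'}$. Together these give $\int_{\mathfrak M}|f_k|^{2s}\d\boldsymbol\beta\ll X^{2s-t'}$ directly. Once you verify these two exponent inequalities, your proof closes, and the dyadic pruning becomes unnecessary.
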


\begin{proof} We apply the Hardy-Littlewood method. In order to streamline our discussion, write 
$u=(k^2+k-2)/2$, and put $\delta=(s-u)/k$. Then the hypotheses of the statement of the lemma ensure that 
$\delta>0$. Write $L=X^{1/(4k)}$. Then, when
\[
1\le q\le L,\quad 0\le \mathbf a\le q\quad \text{and}\quad (q,a_2,\ldots ,a_k)=1,
\]
we define the major arc $\mathfrak M(q,\mathbf a)$ by
\[
\mathfrak M(q,\mathbf a)=\{ \boldsymbol \alpha\in [0,1)^{k-1}:\text{$|\alpha_j-a_j/q|\le LX^{-j}$ $(2\le j\le k)$}\}.
\]
The arcs $\mathfrak M(q,\mathbf a)$ are disjoint, as one may easily check. Let $\mathfrak M$ denote the union of 
these arcs $\mathfrak M(q,\mathbf a)$, and put $\mathfrak m=[0,1)^{k-1}\setminus \mathfrak M$.\par

We recall the definition \eqref{1.1} of $f_k(\boldsymbol \alpha ;X)$, and when 
$\boldsymbol \alpha\in \mathfrak M(q,\mathbf a)\subseteq \mathfrak M$, we put
\[
V(\boldsymbol \alpha ;q,\mathbf a)=q^{-1}S(q,\mathbf a)I(\boldsymbol \alpha -\mathbf a/q;X),
\]
where
\[
S(q,\mathbf a)=\sum_{r=1}^qe\bigl( (a_2r^2+\ldots +a_kr^k)/q\bigr)
\]
and
\[
I(\boldsymbol \beta ;X)=\int_0^Xe(\beta_2\gamma^2+\ldots +\beta_k\gamma^k)\d\gamma .
\]  
We then define $V(\boldsymbol \alpha)$ to be $V(\boldsymbol \alpha;q;\mathbf a)$ when 
$\boldsymbol \alpha\in \mathfrak M(q,\mathbf a)\subseteq \mathfrak M$, and to be $0$ otherwise.\par

The minor arc contribution is estimated by an argument akin to that associated with \cite[equation (7.1)]{Woo2017}. 
Thus, for each $\varepsilon >0$, one has
\[
\sup_{\boldsymbol \alpha\in \mathfrak m}|f_k(\boldsymbol \alpha;X)|\ll X^{1-\tau+\varepsilon},
\]
where $\tau^{-1}=8k^2$. Since $s-u=k\delta>0$, one finds via Lemma \ref{lemma3.2} that
\begin{align*}
\int_{\mathfrak m}|f_k(\boldsymbol \alpha;X)|^{2s}\d\boldsymbol \alpha &\ll 
\biggl( \sup_{\boldsymbol \alpha \in \mathfrak m}|f_k(\boldsymbol \alpha;X)|\biggr)^{2s-2u}
\int_{[0,1)^{k-1}}|f_k(\boldsymbol \alpha ;X)|^{2u}\d\boldsymbol \alpha \\
&\ll (X^{1-\tau})^{2k\delta}X^{u+\varepsilon},
\end{align*}
whence
\begin{equation}\label{4.1}
\int_{\mathfrak m}|f_k(\boldsymbol \alpha;X)|^{2s}\d\boldsymbol \alpha =o(X^{2s-u}).
\end{equation}

\par Meanwhile, as in the proof of \cite[Lemma 7.1]{Woo2017}, one finds that when 
$\boldsymbol \alpha \in \mathfrak M(q,\mathbf a)\subseteq \mathfrak M$, one has 
$f_k(\boldsymbol \alpha;X)-V(\boldsymbol \alpha;q,\mathbf a)\ll L^2$, whence
\[
|f_k(\boldsymbol \alpha ;X)|^{2s}-|V(\boldsymbol \alpha ;q,\mathbf a)|^{2s}\ll L^2X^{2s-1}.
\]
Since $\text{mes}(\mathfrak M)\ll L^{2k-1}X^{-u}$, we deduce that
\begin{equation}\label{4.2}
\int_{\mathfrak M}|f_k(\boldsymbol \alpha;X)|^{2s}\d\boldsymbol \alpha -
\int_{\mathfrak M}|V(\boldsymbol \alpha)|^{2s}\d\boldsymbol \alpha \ll (L^{2k+1}/X)X^{2s-u}=o(X^{2s-u}).
\end{equation}

\par Next, on recalling the definitions of $V(\boldsymbol \alpha)$ and $\mathfrak M$, we find that
\begin{equation}\label{4.3}
\int_{\mathfrak M}|V(\boldsymbol \alpha)|^{2s}\d\boldsymbol \alpha \le \mathfrak SJ,
\end{equation}
where
\[
\mathfrak S=\sum_{q=1}^\infty \underset{(q,a_2,\ldots ,a_k)=1}{\sum_{a_2=1}^q\cdots \sum_{a_k=1}^q}
|q^{-1}S(q,\mathbf a)|^{2s}\quad \text{and}\quad J=\int_{\mathbb R^{k-1}}|I(\boldsymbol \beta ;X)|^{2s}
\d\boldsymbol \beta .
\]
When $(q,a_2,\ldots ,a_k)=1$, we find from \cite[Theorem 7.1]{Vau1997} that 
$S(q,\mathbf a)\ll q^{1-1/k+\varepsilon}$. Then, since $2u/k=k+1-2/k$, we deduce that
\begin{equation}\label{4.4}
\mathfrak S\ll \sum_{q=1}^\infty q^{k-1+\varepsilon}(q^{-1/k})^{2u+2k\delta}\ll \sum_{q=1}^\infty q^{-1-\delta}\ll 1.
\end{equation}
Likewise, from \cite[Theorem 7.3]{Vau1997}, one has
\[
I(\boldsymbol \beta;X)\ll X\bigl( 1+|\beta_2|X^2+\ldots +|\beta_k|X^k\bigr)^{-1/k}\le X\prod_{j=2}^k 
\bigl( 1+|\beta_j|X^j\bigr)^{-1/(k^2-k)}.
\]
Since
\[
\frac{2s}{k^2-k}>\frac{k^2+k-2}{k^2-k}=1+\frac{2}{k},
\]
we therefore find that
\begin{equation}\label{4.5}
J\ll X^{2s}\prod_{j=2}^k\biggl( \int_{-\infty}^\infty (1+|\beta_j|X^j)^{-1-2/k}\d\beta_j\biggr) \ll X^{2s-u}.
\end{equation}

\par On substituting \eqref{4.4} and \eqref{4.5} into \eqref{4.3}, and thence into \eqref{4.2}, we infer that
\[
\int_{\mathfrak M}|f_k(\boldsymbol \alpha;X)|^{2s}\d\boldsymbol \alpha=
\int_{\mathfrak M}|V(\boldsymbol \alpha)|^{2s}\d\boldsymbol \alpha +o(X^{2s-u})\ll X^{2s-u},
\]
whence by \eqref{3.3} and \eqref{4.1}, we have
\[
I_{s,k}(X)=\int_{\mathfrak M}|f_k(\boldsymbol \alpha;X)|^{2s}\d\boldsymbol \alpha +
\int_{\mathfrak m}|f_k(\boldsymbol \alpha;X)|^{2s}\d\boldsymbol \alpha \ll X^{2s-u}.
\]
The conclusion of the lemma follows on recalling that $u=(k^2+k-2)/2$.
\end{proof}

It is a triviality now to complete the proof of Theorem \ref{theorem1.1}.

\begin{proof}[The proof of Theorem \ref{theorem1.1}] The conclusion of Theorem \ref{theorem1.1} follows for 
$s\le (k^2+k-2)/2$ from Lemma \ref{lemma3.2}, and for $s>(k^2+k-2)/2$ from Lemma \ref{lemma4.1}.
\end{proof}

We remark that, with a little additional effort, one would obtain an asymptotic formula for $I_{s,k}(X)$ when 
$s>(k^2+k-2)/2$ by adapting the proof of Lemma \ref{lemma4.1}.

\begin{theorem}\label{theorem4.2}
There are positive numbers $C=C(s,k)$ and $\delta=\delta(s,k)$ having the property that, whenever $s>(k^2+k-2)/2$, 
one has
\[
I_{s,k}(X)=CX^{2s-(k^2+k-2)/2}+O(X^{2s-(k^2+k-2)/2 -\delta}).
\]
\end{theorem}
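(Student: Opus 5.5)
We follow the proof of Lemma \ref{lemma4.1}, keeping track of power savings in place of $o(\cdot)$ and completing the singular series and singular integral. Write $u=(k^2+k-2)/2$ and $\delta_0=(s-u)/k>0$, and keep $L=X^{1/(4k)}$ and the dissection $\mathfrak M\cup\mathfrak m$. For the minor arcs, the argument leading to \eqref{4.1} already gives a power saving. It combines the Weyl-type bound $\sup_{\mathfrak m}|f_k|\ll X^{1-\tau+\varepsilon}$ with Lemma \ref{lemma3.2} at exponent $u$, yielding
\[
\int_{\mathfrak m}|f_k(\boldsymbol\alpha;X)|^{2s}\d\boldsymbol\alpha\ll X^{2s-u-2k\delta_0\tau+\varepsilon}.
\]
Similarly, \eqref{4.2} bounds the error from replacing $f_k$ by $V$ on $\mathfrak M$ by $O(L^{2k+1}X^{2s-u-1})$. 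Since $L^{2k+1}/X=X^{-(2k-1)/(4k)}$, this is also a power saving.

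The main term is $\int_{\mathfrak M}|V|^{2s}$. Since the arcs are disjoint, it equals
\[
\sum_{q\le L}\sum_{\mathbf a}|q^{-1}S(q,\mathbf a)|^{2s}\int_{\mathcal B}|I(\boldsymbol\beta;X)|^{2s}\d\boldsymbol\beta,
\]
where $\mathcal B$ is the box $|\beta_j|\le LX^{-j}$. We complete both factors.

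\emph{Singular series.} By the bound $S(q,\mathbf a)\ll q^{1-1/k+\varepsilon}$ from \eqref{4.4}, the tail $q>L$ contributes $\ll L^{-\delta_0+\varepsilon}$. Hence the truncated sum equals $\mathfrak S+O(L^{-\delta_0/2})$, where $\mathfrak S$ is the full series.

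\emph{Singular integral.} Substituting $\beta_j=X^{-j}\gamma_j$ gives $I(\boldsymbol\beta;X)=X\,I(\boldsymbol\gamma;1)$. Therefore
\[
\int_{\mathbb R^{k-1}}|I(\boldsymbol\beta;X)|^{2s}\d\boldsymbol\beta=X^{2s-u}\mathcal J,
\qquad \mathcal J=\int_{\mathbb R^{k-1}}|I(\boldsymbol\gamma;1)|^{2s}\d\boldsymbol\gamma,
\]
and $\mathcal J$ is finite by \eqref{4.5}. For the tail outside $\mathcal B$, some $|\gamma_j|>L$. Use the bound $|I(\boldsymbol\gamma;1)|\ll(1+|\gamma_2|+\dots+|\gamma_k|)^{-1/k}$ from \eqref{4.5}. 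Spend a small part of the exponent on the large coordinate and keep the product bound on the rest, which remains integrable because $2s/(k^2-k)>1+2/k$ strictly. This shows the tail is $\ll X^{2s-u}L^{-\eta}$ for some $\eta>0$ depending on $s,k$.

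Combining these, $I_{s,k}(X)=\mathfrak S\mathcal J X^{2s-u}+O(X^{2s-u-\delta})$ for a suitable $\delta>0$, so we may take $C=\mathfrak S\mathcal J$.

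The remaining task, and the one I expect to be the main obstacle, is positivity of $C$. For $\mathcal J>0$, write $\mathcal J$ as the real density of solutions of $\sum_{i\le s}(\xi_i^j-\eta_i^j)=0$ for $2\le j\le k$ in $[0,1]^{2s}$, obtained as the limit of Fejér-kernel smoothings. The diagonal point $\boldsymbol\xi=\boldsymbol\eta$ with distinct coordinates is a nonsingular real solution, since the Jacobian in $k-1$ of the variables is a Vandermonde-type determinant $\prod\xi_i\cdot\prod(\xi_i-\xi_l)\ne0$. The implicit function theorem then gives a positive-measure solution manifold, so $\mathcal J>0$.

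For $\mathfrak S>0$, use absolute convergence to factor $\mathfrak S=\prod_p\chi_p$ with $\chi_p=\sum_{h\ge0}A(p^h)$. Here $\chi_p$ is the $p$-adic density $\lim_h p^{h(k-1-2s)}M(p^h)$, where $M(p^h)$ counts solutions modulo $p^h$. The bound on $S(q,\mathbf a)$ gives $\chi_p=1+O(p^{-1-\delta_0})$, so it suffices that each $\chi_p>0$. This follows by Hensel lifting from a nonsingular $p$-adic solution. Take the diagonal solution with $\xi_i$ distinct, units, and pairwise incongruent modulo $p$ when $p>s$. For small primes, choose lifts with controlled $p$-adic valuations of the Vandermonde factor, following the standard argument for Vinogradov systems (for instance, as in \cite{Vau1997}). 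This gives $M(p^h)\gg p^{h(2s-k+1)}$, and hence $\chi_p>0$.
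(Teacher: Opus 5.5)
Your error analysis is correct and is the route the paper has in mind. The paper leaves the proof to the reader, pointing to the argument of Lemma~\ref{lemma4.1} and to the proof of Vaughan's Theorem~7.4. The minor arc saving $X^{-2k\delta_0\tau+\varepsilon}$, the major arc approximation error $O(X^{2s-u-(2k-1)/(4k)})$, and the completion of the truncated singular series and singular integral all check out, giving $C=\mathfrak S\mathcal J$.

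The gap is in the positivity step, which you single out as the main obstacle. You read $\mathcal J$ as a real density of solutions, and $\chi_p$ as $\lim_h p^{h(k-1-2s)}M(p^h)$, where $M(p^h)$ counts solutions of a system in $2s$ variables. Both interpretations exist only when $s\in\mathbb N$. In the statement, as throughout the paper (compare Lemmas~\ref{lemma3.2} and~\ref{lemma4.1}), $s$ is any real number exceeding $(k^2+k-2)/2$. For non-integral $s$ your implicit-function and Hensel-lifting arguments have nothing to act on, so $C>0$ is left unproved there.

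In fact no Diophantine input is needed at all. Every term of $\mathfrak S$ is non-negative, and the term $q=1$ equals $|S(1,\mathbf a)|^{2s}=1$, so $\mathfrak S\ge 1$; likewise each $\chi_p\ge 1$. For $\mathcal J$, suppose $|\gamma_j|\le (8k)^{-1}$ for $2\le j\le k$. Then $\mathrm{Re}\, e(\gamma_2t^2+\ldots+\gamma_kt^k)\ge 2^{-1/2}$ for $0\le t\le 1$. Hence $|I(\boldsymbol\gamma;1)|\ge 2^{-1/2}$ on a box of positive measure, and $\mathcal J>0$. This is the same observation that gives the lower bound in the introduction. With this replacement the argument is complete for all real $s>(k^2+k-2)/2$, and the vague small-prime lifting is no longer needed.
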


We leave the proof of this conclusion to the enthusiastic reader, noting that the proof of \cite[Theorem 7.4]{Vau1997} 
provides a pedestrian route plan for the required argument.

\section{A sharp mean value estimate in number fields}
We begin by describing the infrastructure required to describe mean value estimates in number fields. Consider an 
algebraic extension $K$ of degree $n$ over $\mathbb Q$. Associated with $K$ are the real conjugate fields $K^{(l)}$ 
$(1\le l\le n_1)$, and the pairs of complex conjugate fields $K^{(m)}$ and $K^{(m+n_2)}$ $(n_1+1\le m\le n_1+n_2)$. 
Here, one has $n=n_1+2n_2$. Write $\mathfrak O_K$ for the ring of integers of $K$, and fix a basis 
$\Omega=\{ \omega_1,\ldots ,\omega_n\}$ for $\mathfrak O_K$ over $\mathbb Z$. When $X$ is a large positive 
number, we define the box $\mathcal B(X)\subset \mathfrak O_K$, with $\text{card}(\mathcal B(X))\asymp X$, by 
taking
\[
\mathcal B(X)=\bigl\{ r_1\omega_1+\ldots +r_n\omega_n: 
\text{$r_i\in \bigl[-\tfrac{1}{2}X^{1/n},\tfrac{1}{2}X^{1/n}\bigr)\cap \mathbb Z$ $(1\le i\le n)$}\bigr\} .
\]

\par When $\gamma\in K$, denote by $\gamma^{(i)}$ the conjugates of $\gamma$, with $\gamma^{(i)}\in K^{(i)}$ 
$(1\le i\le n)$, and the associated trace map 
${\rm{Tr}}_{K/\mathbb Q}(\gamma)=\gamma^{(1)}+\ldots +\gamma^{(n)}$. Next, when 
$\boldsymbol \gamma\in K^n$ and $\boldsymbol \theta\in \mathbb R^n$, and
\[
\lambda(\boldsymbol \theta)=\theta_1\gamma_1+\ldots +\theta_n\gamma_n,
\]
we define
\[
\lambda^{(i)}(\boldsymbol \theta)=\theta_1\gamma_1^{(i)}+\ldots +\theta_n\gamma_n^{(i)}\quad (1\le i\le n).
\]
The trace map on $K$ then extends to the map defined by
\[
{\rm{Tr}}_{K/\mathbb Q}(\lambda(\boldsymbol \theta))=\lambda^{(1)}(\boldsymbol \theta)+\ldots +\lambda^{(n)}
(\boldsymbol \theta),
\]
and we define
\[
E(\lambda(\theta))=e\bigl( {\rm{Tr}}_{K/\mathbb Q}(\lambda(\boldsymbol \theta))\bigr) .
\]

\par Next, let $\mathfrak d^{-1}$ denote the inverse different, so that
\[
\mathfrak d^{-1}=\{ \eta\in K:\text{${\rm{Tr}}_{K/\mathbb Q}(\eta\xi)\in \mathbb Z$ for all $\xi\in \mathfrak O_K$}\}.
\]
There is a basis ${\rm P}=\{\rho_1,\ldots ,\rho_n\}$ of $\mathfrak d^{-1}$ dual to $\Omega$ having the property that
\[
{\rm{Tr}}_{K/\mathbb Q}(\rho_i\omega_j)=\begin{cases} 1,&\text{when $i=j$,}\\
0,&\text{when $i\ne j$.}\end{cases}
\]
As a special case of the linear form $\lambda(\boldsymbol \theta)$ defined above, we define 
$\alpha=\alpha(\boldsymbol \theta)$ by
\[
\alpha(\boldsymbol \theta)=\theta_1\rho_1+\ldots +\theta_n\rho_n.
\]
When such a form occurs in an $n$-fold integral, we abbreviate the $n$-fold differential 
${\rm d}\theta_1\cdots \d\theta_n$ to ${\rm d}\alpha$. We also write $\mathbb T$ for $[0,1)^n$. In this notation, 
we have the orthogonality relation
\[
\int_{\mathbb T}E(\alpha\gamma)\d\alpha =\begin{cases}1,&\text{when $\gamma=0$,}\\
0,&\text{when $\gamma\in \mathfrak O_K\setminus \{0\}$.}\end{cases}
\]
The analogue of Theorem \ref{theorem1.1} for number fields may now be announced.

\begin{theorem}\label{theorem5.1}
Suppose that $k\ge 2$ and $s>0$. Then for each $\varepsilon>0$, one has
\[
\int_{\mathbb T^{k-1}}\biggl| \sum_{x\in \mathcal B(X)}E(\alpha_2x^2+\ldots +\alpha_kx^k)\biggr|^{2s}
\d\boldsymbol \alpha \ll X^\varepsilon \bigl( X^s+X^{2s-(k^2+k-2)/2}\bigr).
\]
\end{theorem}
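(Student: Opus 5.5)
The plan is to transplant the argument of \S\S2--3 verbatim into the number field setting, replacing rational integers by elements of $\mathfrak O_K$, intervals $[1,X]$ by boxes $\mathcal B(X)$, and the additive character $e(\cdot)$ by $E(\cdot)$. Write $t=k(k+1)/2$. We introduce the number field analogues $F_k(\boldsymbol\alpha;X)$, $f_k(\boldsymbol\alpha;X)$, $g_k(\boldsymbol\beta;X)$ and $b_k(\boldsymbol\alpha,\boldsymbol\beta;X)$, with summations over $z,h,x\in\mathcal B(X)$ (or suitable dilated boxes $\mathcal B(cX)$), and the corresponding mean values $J_{s,k}$, $I_{s,k}$, $A_{s,k}$, $B_{s,k}$ with integration over $\mathbb T^k$ or $\mathbb T^{k-1}$. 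The first input is the number field version of Vinogradov's main conjecture, $J_{t,k}(X)\ll X^{t+\varepsilon}$. The second input is the number field version of \eqref{2.6}, namely $B_{t,k}(X)\ll X^{2t+\varepsilon}$. The latter is exactly where the appeal to \cite{Woo2026} (rather than \cite{GZK2020}) matters, since that work is set up to apply over $\mathfrak O_K$. Given these two inputs, the proof of Lemma \ref{lemma2.1} goes through unchanged: orthogonality over $\mathbb T^k$ gives $g_k=\int|b_k|^2\d\boldsymbol\alpha$, and H\"older's inequality yields $A_{t,k}(X)\le B_{t,k}(X)\ll X^{2t+\varepsilon}$.

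Next comes the analogue of Lemma \ref{lemma3.1} for $s=t-1$. By orthogonality, $I_{t-1,k}(X)$ counts solutions of $\sum_i(x_i^j-y_i^j)=0$ for $2\le j\le k$ with $x_i,y_i\in\mathcal B(X)$. We reinstate the linear equation by introducing $h=\sum_i(x_i-y_i)$, which lies in a box $\mathcal B(cX)$ for a constant $c$ depending on $s,n$ and the basis $\Omega$. We then write $h=h_1-h_2$ with $h_1,h_2\in\mathcal B(c'X)$; each such $h$ has $\gg X$ representations, costing a factor $X^{-1}$.

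The shift step then uses the binomial theorem with $z\in\mathcal B(X)$. The shifted variables $x_i+z$ lie in $\mathcal B(2X)$, up to the convention for the half-open box, which is harmless after enlarging the box by a constant factor. The right hand sides become $jhz^{j-1}$. Averaging over the $\asymp X$ choices of $z$ costs a further $X^{-1}$. Here $jh$ ranges over a sublattice of $\mathfrak O_K$, so the sum over $h$ is a subsum of the sum defining $g_k(\boldsymbol\alpha;c''X)$. By orthogonality and positivity, this yields
\[
I_{t-1,k}(X)\ll X^{-2}\int_{\mathbb T^k}|F_k(\boldsymbol\alpha;2X)|^{2t-2}g_k(\boldsymbol\alpha;c''X)\d\boldsymbol\alpha.
\]
Applying H\"older's inequality with exponents $t/(t-1)$ and $t$ gives $I_{t-1,k}(X)\ll X^{-2}(X^{t+\varepsilon})^{1-1/t}(X^{2t+\varepsilon})^{1/t}=X^{t-1+\varepsilon}$. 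Interpolating via H\"older's inequality for $s\le t-1$, and using the trivial bound $|f_k|\ll X$ for $s>t-1$, then gives the theorem. The statement carries the factor $X^\varepsilon$ throughout, so no analogue of \S4 is needed.

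The main obstacle I expect is bookkeeping rather than ideas. First, the boxes are not closed under addition or shifts, so every range must be replaced by a dilate $\mathcal B(cX)$, and we need the uniform facts $\operatorname{card}\mathcal B(cX)\asymp X$ and that the number of representations $h=h_1-h_2$ is $\gg X$. Second, and more delicate, we must confirm that the cited result in \cite{Woo2026} gives $B_{t,k}\ll X^{2t+\varepsilon}$ for the bivariate system over $\mathfrak O_K$ with box-shaped ranges. If \cite{Woo2026} is phrased for general translation-dilation invariant systems over $\mathfrak O_K$, I would verify that the system with forms $z^j$ and $hz^{j-1}$ is of that type with the correct homogeneous weight sum $k(k+1)$, which matches the $X^{4t-k(k+1)}$ term in \eqref{2.6}.
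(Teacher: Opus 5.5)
Your proposal follows the paper's proof: the paper also transplants \S\S2--3 to $\mathfrak O_K$, takes the number field analogue of \eqref{2.6} from \cite[Theorem 11.2]{Woo2026} and that of \eqref{3.8} from \cite[Corollary 15.3]{Woo2019}, and needs no analogue of \S4 because the statement carries $X^\varepsilon$. One justification needs repair. After the shift the $j$-th equation has right-hand side $j(h_1-h_2)z^{j-1}$, and the multiplier $j$ depends on the equation, so $(h,2hz,\ldots,khz^{k-1})$ is not of the form $(h',h'z,\ldots,h'z^{k-1})$ and your ``sublattice/subsum'' argument does not apply. What you actually get is a bound by $g_k(D\boldsymbol\alpha;c''X)$, where $D\boldsymbol\alpha=(\alpha_1,2\alpha_2,\ldots,k\alpha_k)$. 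This is harmless in the H\"older step, since $\int_{\mathbb T^k}g_k(D\boldsymbol\alpha;c''X)^t\,\mathrm{d}\boldsymbol\alpha=A_{t,k}(c''X)$: $g_k$ is $\mathbb Z^n$-periodic in the $\boldsymbol\theta$-coordinates of each $\beta_j$ (all frequencies $hz^{j-1}$ lie in $\mathfrak O_K$), and $\boldsymbol\theta\mapsto j\boldsymbol\theta$ preserves Haar measure on $\mathbb R^n/\mathbb Z^n$. The paper's own display in the proof of Lemma~\ref{lemma3.1} silently drops the same factor.
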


\begin{proof} In the number field setting, one may proceed just as in the arguments described in \S\S2 and 3. The 
conclusion of \cite[Theorem 11.2]{Woo2026} supplies the analogue of \eqref{2.6} needed to prove the number field 
version of Lemma \ref{lemma2.1}. Moreover, the analogue of the bound \eqref{3.8} is supplied by 
\cite[Corollary 15.3]{Woo2019}. Thus, the argument of \S3 applies mutatis mutandis.
\end{proof}

Additional work is needed to obtain the analogue of the \S4 arguments capable of delivering an $\varepsilon$-free 
version of Theorem \ref{theorem5.1} for $s>(k^2+k-2)/2$. This we leave as an exercise for the enthusiastic reader.

\section{A function field variant}
We again initiate proceedings in function fields by introducing additional infrastructure. Consider a finite field 
$\mathbb F_q$, with $q=p^l$, of characteristic $p>k$. Associated with the polynomial ring 
$\mathbb O=\mathbb F_q[t]$ is the field of fractions $\mathbb K =\mathbb F_q(t)$. We take as our basic 
parameter $X$, a sufficiently large natural number, and we put
\[
\mathbb O_X=\{ \nu\in \mathbb F_q[t]:\text{deg}\, \nu\le X\}.
\]
Write $\mathbb K_\infty=\mathbb F_q((1/t))$ for the completion of $\mathbb F_q(t)$ at $\infty$. Each element 
$\alpha \in \mathbb K_\infty\setminus \{0\}$ may be written in the shape $\alpha=\sum_{i\le n}a_it^i$, for some 
$n\in \mathbb Z$, with $a_i=a_i(\alpha)\in \mathbb F_q$ for $-\infty <i\le n$ and $a_n\ne 0$. We define 
$\text{ord}\, \alpha=n$, adopt the convention that $\text{ord}\, 0=-\infty$, and introduce the compact additive 
subgroup $\mathbb T=\{ \alpha \in \mathbb K_\infty: \text{ord}\, \alpha<0\}$ of $\mathbb K_\infty$. We may 
normalise any Haar measure ${\rm d}\alpha$ on $\mathbb K_\infty$ in such a manner that 
$\int_\mathbb T\d\alpha =1$.\par

We next introduce the non-trivial character $e_q:\mathbb F_q\rightarrow \mathbb C^\times$, defined for 
$a\in \mathbb F_q$ by taking $e_q(a)=e({\rm tr}(a)/p)$, where ${\rm tr}(a)=a+a^p+\ldots +a^{p^{l-1}}$. Define 
$\text{res}(\alpha)$ to be $a_{-1}(\alpha)$ when $\text{ord}\, \alpha\ge -1$, and $0$ otherwise. Then, this 
character $e_q(\,\cdot\,)$ induces a map $e:\mathbb K_\infty \rightarrow \mathbb C^\times$, defined by 
$e(\alpha)=e_q(\text{res}\, \alpha)$ for $\alpha\in \mathbb K_\infty$. One then has
\[
\int_\mathbb T e(h\alpha)\d\alpha =\begin{cases}1,&\text{when $h=0$,}\\
0,&\text{when $h\in \mathbb F_q[t]\setminus \{0\}$.}\end{cases}
\]

We now announce our analogue of Theorem \ref{theorem1.1} for function fields.

\begin{theorem}\label{theorem6.1} Suppose that $k\ge 2$ and $s>0$. Then, whenever 
${\rm ch}(\mathbb F_q)>k$ and $\varepsilon>0$,
\[
\int_{\mathbb T^{k-1}}\biggl| \sum_{x\in \mathbb O_X}e(\alpha_2x^2+\ldots +\alpha_kx^k)\biggr|^{2s}\d\boldsymbol 
\alpha \ll (q^X)^\varepsilon \bigl( (q^X)^s+(q^X)^{2s-(k^2+k-2)/2}\bigr) .
\]
\end{theorem}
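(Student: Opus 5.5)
The plan is to transport the arguments of \S\S2 and 3 verbatim into $\mathbb F_q[t]$, with $q^X$ (roughly $\text{card}(\mathbb O_X)$) playing the role of $X$. We write $\widehat X=q^X$ and define the function field analogues of $f_k$, $F_k$, $g_k$, $b_k$, $g^*_{k,s}$ by summing over $x,z,h\in \mathbb O_X$ (or $\mathbb O_{X+c}$ for a bounded shift $c$), using the character $e(\cdot)$ and integrating over $\mathbb T^k$. The hypothesis ${\rm ch}(\mathbb F_q)>k$ is retained throughout. It guarantees that the binomial coefficients and the multipliers $j$ $(1\le j\le k)$ appearing in the shifting argument are units in $\mathbb F_q$, and it is also the hypothesis under which the function field Vinogradov main conjecture is available.

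\emph{Analogue of Lemma \ref{lemma2.1}.} We show $A_{t,k}\ll \widehat X^{2t+\varepsilon}$ with $t=k(k+1)/2$. The identity $g_k=\int_{\mathbb T^k}|b_k|^2\d\boldsymbol\alpha$ follows from orthogonality on $\mathbb T$. H\"older's inequality then gives $A_{t,k}\le B_{t,k}$ exactly as in \eqref{2.5}. For the bound $B_{t,k}\ll \widehat X^{2t+\varepsilon}$ we invoke the function field version of the nested efficient congruencing result in \cite{Woo2026}, the counterpart of \cite[Corollary 1.4]{Woo2026} and \cite[Theorem 11.2]{Woo2026}. The interpolation to all real $s>0$ via H\"older's inequality and the trivial bound $|g_k|\le \widehat X^3$ is then formal.

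\emph{Analogue of Lemma \ref{lemma3.1}.} We express $I_{s,k}$ as a count of solutions of \eqref{3.5}, where now $h\in\mathbb O_X$. This is already the full range of possible values of $\sum(x_i-y_i)$, since degrees do not grow under addition, so no factor $s$ is needed. We then represent $h=h_1-h_2$ with $h_1,h_2\in\mathbb O_X$, which yields $\widehat X$ representations and hence $I_{s,k}\le \widehat X^{-1}I^*_{s,k}$. Next we shift every variable by $z\in\mathbb O_X$. The non-archimedean setting makes this easier than in \S3: $x_i+z\in\mathbb O_X$ again, so no enlargement of the box is required. The factor $j$ multiplying $hz^{j-1}$ is harmless because it is a nonzero constant when $p>k$. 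Averaging over $z$ and applying orthogonality gives $I_{s,k}\ll \widehat X^{-2}\int |F_k|^{2s}g_k\d\boldsymbol\alpha$.

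\emph{Analogue of Lemma \ref{lemma3.2}.} We take $s=t-1$ and apply H\"older's inequality with exponents $(t/(t-1),t)$. Into this we feed the function field Vinogradov bound $J_{t,k}\ll \widehat X^{t+\varepsilon}$, which is the function field analogue of \eqref{3.8} from \cite{Woo2019}, valid for $p>k$. Together with the bound on $A_{t,k}$ this gives $I_{t-1,k}\ll \widehat X^{t-1+\varepsilon}$. H\"older's inequality and the trivial bound then extend this to all $s>0$.

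The main obstacle is the first of these steps: making sure that the required input for the enveloping mean value $B_{t,k}$, and the function field Vinogradov bound, are genuinely available from \cite{Woo2026} and \cite{Woo2019} in characteristic $p>k$ for the non-translation-dilation-invariant bivariate system at hand. We would check that the system $\varphi_k(z)+\psi_k(z,h)$ is translation-dilation invariant in the required sense once $p>k$, since derivatives of $z^i$ introduce factors $i<p$. If a direct citation is lacking, the fallback is to note that the nested efficient congruencing argument of \cite{Woo2026} is insensitive to the underlying ring beyond requiring invertibility of $1,\dots,k$.
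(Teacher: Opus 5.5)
Your proposal is correct and follows the paper's route. The paper's proof simply runs the arguments of \S\S2 and 3 mutatis mutandis in $\mathbb F_q[t]$: it takes the bound for $B_{t,k}$ from the function field result \cite[Theorem 12.1]{Woo2026}, and the Vinogradov input $J_{t,k}\ll (q^X)^{t+\varepsilon}$ from \cite[Corollary 17.3]{Woo2019}. Your added observations are accurate refinements of that same argument: shifts need no enlargement of $\mathbb O_X$, each $h=h_1-h_2$ has exactly $\mathrm{card}(\mathbb O_X)$ representations, and $p>k$ makes the multipliers $j$ units, so that $\boldsymbol\alpha\mapsto(\alpha_1,2\alpha_2,\ldots,k\alpha_k)$ preserves measure on $\mathbb T^k$.
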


\begin{proof} We follow \S\S2 and 3, modified mutatis mutandis, to the function field setting. The conclusion of 
\cite[Theorem 12.1]{Woo2026} supplies the analogue of \eqref{2.6} needed to prove the analogue of Lemma 
\ref{lemma2.1}, the analogue of \eqref{3.8} being supplied by \cite[Corollary 17.3]{Woo2019}.
\end{proof}

We again leave as an exerise the additional work required to obtain the analogue of the arguments in \S4 that deliver 
an $\varepsilon$-free version of Theorem \ref{theorem6.1} for $s>(k^2+k-2)/2$.

\section{Some comments on the cubic case of Theorem \ref{theorem1.1}}
We now address the special case of Theorem \ref{theorem1.1} in which $k=3$. Here, we have
\[
\int_0^1 \!\! \int_0^1 \biggl| \sum_{1\le x\le X}e(\alpha x^3+\beta x^2)\biggr|^{2s}\d\alpha\d\beta \ll 
X^{s+\varepsilon}+X^{2s-5},
\]
and when $s=5$, in particular, the upper bound
\begin{equation}\label{7.1}
\int_0^1 \!\! \int_0^1 \biggl| \sum_{1\le x\le X}e(\alpha x^3+\beta x^2)\biggr|^{10}\d\alpha\d\beta \ll 
X^{5+\varepsilon}.
\end{equation}
The mean value on the left hand side of \eqref{7.1} has been the subject of intensive investigation by the author 
over the past 35 years. Early work \cite{Woo1990} and \cite{Woo1991} was restricted to exponential sums over 
smooth numbers, obtaining analogues of \eqref{7.1} in which the exponent $5$ is replaced by $35/6$, with 
subsequent work \cite{Woo1998} improving the latter exponent to $17/3$. More recently, 
following the author's resolution \cite{Woo2016} of the main conjecture in the cubic case of Vinogradov's mean value 
theorem, the upper bound \eqref{7.1} was established with the exponent $5$ replaced by $16/3$. The latter work was 
employed by the author \cite{Woo2015} to establish a quantitative Hasse principle for pairs of diagonal Diophantine 
equations, one cubic and one quadratic, in $11$ or more variables. This conclusion was much extended in work of 
Brandes \cite{Bra2017} concerning systems of (many) diagonal Diophantine cubic and quadratic equations. The sharp 
upper bound \eqref{7.1} should foster improvements in the latter results so far as the number of variables is 
concerned.\par

One challenge that remains unresolved concerns a discrete restriction analogue of the upper bound \eqref{7.1}. Let 
$(a_n)$ denote a sequence of complex numbers. It is plausible that
\begin{equation}\label{7.2}
\int_0^1 \!\! \int_0^1 \biggl| \sum_{1\le n\le X}a_ne(\alpha n^3+\beta n^2)\biggr|^{10}\d\alpha\d\beta \ll 
X^\varepsilon \biggl( \sum_{1\le n\le X}|a_n|^2\biggr)^5.
\end{equation}
However, the methods of this paper establish such a bound only when the $a_n$ are more-or-less 
$1$-bounded and not too thinly supported. The sharpest available unconditional estimate of this flavour would 
appear to be \cite[Corollary 1.3]{Woo2017}, which gives the bound
\[
\int_0^1 \!\! \int_0^1 \biggl| \sum_{1\le n\le X}a_ne(\alpha n^3+\beta n^2)\biggr|^{12}\d\alpha\d\beta \ll 
X^{1+\varepsilon} \biggl( \sum_{1\le n\le X}|a_n|^2\biggr)^6.
\]
Consequently, the proof of \eqref{7.2} apparently remains a significant challenge for aficionados.

\bibliographystyle{amsbracket}
\providecommand{\bysame}{\leavevmode\hbox to3em{\hrulefill}\thinspace}

\end{document}